\documentclass[a4paper,reqno,12pt]{amsart}
\usepackage[dvips]{graphicx}
\usepackage{graphics}
\usepackage{amsthm, amssymb, amsmath, amsfonts}

\setcounter{MaxMatrixCols}{10}

\addtolength{\oddsidemargin}{-\oddsidemargin}
\setlength{\oddsidemargin}{-0.04cm}
\addtolength{\evensidemargin}{-\evensidemargin}
\setlength{\evensidemargin}{-0.04cm}
\addtolength{\textwidth}{-\textwidth}
\setlength{\textwidth}{16.5cm}
\addtolength{\marginparsep}{-\marginparsep}
\setlength{\marginparsep}{0cm}
\addtolength{\marginparwidth}{-\marginparwidth}
\setlength{\marginparwidth}{0cm}
\addtolength{\topmargin}{-\topmargin}
\setlength{\topmargin}{-1.7cm}
\addtolength{\headheight}{-\headheight}
\setlength{\headheight}{0.0cm}
\addtolength{\headsep}{-\headsep}
\setlength{\headsep}{2cm}
\addtolength{\textheight}{-\textheight}
\setlength{\textheight}{24.5cm}
\addtolength{\footskip}{-\footskip}
\setlength{\footskip}{1.2cm}
\pagestyle{plain} \pagenumbering{arabic}
\newtheorem{thm}{Theorem}[section]
\newtheorem{lem}{Lemma}[section]

\newtheorem{defn}{Definition}[section]
\newtheorem{ex}{Example}[section]
\newtheorem{rmk}{Remark}[section]

\newcommand{\Real}{{\mathbb R}}
\newcommand{\Natural}{{\mathbb N}}
\newcommand{\ds}{\displaystyle}
\newcommand{\To}{\longrightarrow}
\def\1{\'{a}}
\def\2{\'{e}}
\def\3{\'{o}}
\def\4{\'{u}}
\def\5{\H{o}}
\def\6{\H{u}}
\def\a{\alpha}

\def\l{\lambda}
\def\<{\langle}
\def\>{\rangle}

\begin{document}
\title{Existence of solutions of inverted variational inequalities}
\author{ Szil\'ard L\' aszl\' o$^*$}
\thanks{$^*$This work was supported by a grant of the Romanian National Authority for Scientific Research, CNCS-UEFISCDI, project number PN-II-ID-PCE-2011-3-0024.}
\address{S. L\' aszl\' o, Department of Mathematics, Technical University of Cluj Napoca,
Str. Memorandumului nr. 28, 400114 Cluj-Napoca, Romania.}
\email{szilard.laszlo@math.utcluj.ro}

\begin{abstract} In this paper we introduce two new generalized variational inequalities, and we give some existence results of the solutions for these variational inequalities involving operators belonging to a recently introduced class of operators. We show by examples, that our results fail outside of this class. Further, we establish a result that may be viewed as a generalization of Minty's theorem, that is, we show that under some circumstances the set of solutions of these variational inequalities coincide. We also show that the operators, involved in these variational inequalities, to belong to the above mentioned class, is essential in obtaining this result.
 As application, we show that Brouwer's fixed point theorem is an easy consequence of our results.
\newline

\emph{Keywords:}{Operator of type ql; variational inequality; Minty type theorem; KKM mapping}

\emph{MSC:} 26A48, 47H99, 54A99
\end{abstract}

\maketitle

\section{Introduction}
 The theory of variational inequalities-often related to the theory of (generalized) monotone operators-goes back to Stampacchia (see \cite{Sta}), and Fichera (see \cite{Fic}), and provides powerful techniques for studying problems arising in optimization, transportation, economic equilibrium, and other problems of practical interest. For instance, the free and moving boundary value problems can be studied effectively in the framework of variational inequalities (see, for instance, \cite{Bai, Ber-Gaf, Daf}). For a comprehensive discussion on variational inequalities and their use see see \cite{Kin-Sta}. However, the classical variational inequality theory developed so
far is applicable for studying free and moving boundary value problems of even order only. In recent years, many generalizations of variational inequalities have been considered, and it has been shown that generalized variational inequalities provide us with an unified, simple, and natural framework to study a wide class of problems including unilateral, moving, obstacle, free, equilibrium, and economics arising in various areas of pure and applied sciences.

In this paper we give some existence results of the solutions for some generalized variational inequalities.

Throughout this paper, unless is otherwise specified, we assume that $X$ is a real Banach space and $X^*$ is the topological dual of $X.$ We denote by $\<x^*,x\>$  the value of the linear and continuous functional $x^*\in X^*$ in $x\in X.$ Consider the set $K\subseteq X,$ and let  $A:K\To X^*$ and $a:K\To X$ be two given operators. The general variational inequalities, considered in this paper, are some generalizations of the classic  variational inequalities of Stampacchia and of Minty.

Recall that the so called general variational inequality of Stampacchia type, $VI_S(A,a,K)$, (see \cite{La}),  consists in finding an element $x\in K,$ such that
\begin{equation}\label{e4.1.1}
\<A(x),a(y)-a(x)\>\ge 0,\,\mbox{\, for all\,}\,y\in K,
\end{equation}
Obviously, when $a\equiv id_K,$ then $(\ref{e4.1.1}),$ reduces to the well known Stampachia variational inequality $VI_S(A,K)$ which consists in finding an element $x\in K,$ such that
\begin{equation*}
\<A(x),y-x\>\ge 0,\,\mbox{\, for all\,}\,y\in K.
\end{equation*}
Interchanging the role of $A$ and $a$ in $VI_S(A,a,K),$ we obtain the \emph{inverted general variational inequality  of Stampacchia type}, $VI_{iS}(A,a,K),$ which consist of finding an element  $x\in K$ such that
\begin{equation}\label{e4.1.2}
\<A(y)-A(x),a(x)\>\ge 0,\,\mbox{\, for all\,}\,y\in K.
\end{equation}
\begin{rmk}\rm\label{r4.1.1}
It can be easily observed that in Hilbert spaces, (i.e $X$ is a Hilbert space), the formulations $(\ref{e4.1.1})$ and $(\ref{e4.1.2})$ coincide.
\end{rmk}
Recall that the general variational inequality of Minty type, $VI_M(A,a,K)$, (see \cite {La}), consists in finding an element $x\in K,$ such that
\begin{equation}\label{e4.1.3}
\<A(y),a(y)-a(x)\>\ge 0,\,\mbox{\, for all\,}\,y\in K.
\end{equation}
Obviously, when $a\equiv id_K,$ then $(\ref{e4.1.3})$  reduces to the well known Minty variational inequality $VI_M(A,K)$ which consists in finding an element $x\in K,$ such that
\begin{equation*}
\<A(y),y-x\>\ge 0,\,\mbox{\, for all\,}\,y\in K.
\end{equation*}

Interchanging the role of $A$ and $a$ in $VI_M(A,a,K),$ we obtain the \emph{inverted general variational inequality of Minty type}, $VI_{iM}(A,a,K),$ which consist of finding an element  $x\in K$ such that
\begin{equation}\label{e4.1.4}
\<A(y)-A(x),a(y)\>\ge 0,\,\mbox{\, for all\,}\,y\in K.
\end{equation}
\begin{rmk}\rm\label{r4.1.2}
It can be easily observed that in Hilbert spaces the formulations $(\ref{e4.1.3})$ and $(\ref{e4.1.4})$ coincide.
\end{rmk}
In \cite{La} were presented some existence results for $(\ref{e4.1.1}),$ respectively $(\ref{e4.1.3}),$ in the case when $K$ is weakly compact and convex.

Also in \cite{La}, was introduced a new class of operators, the class of operators of type ql, that seems to be a useful extension of the class of linear operators.  Further, it was shown, that in one dimensional case, this class coincide with the set of monotone functions, even more, an operator taking its values in $\Real$ is of type ql if, and only if, is quasilinear.

The paper is organized as follows. In Section 2, we give some preliminary results that will be used throughout this paper. In Section 3, we state and prove the main results of this paper. Based on the concept of operators of type ql,  some sufficient conditions, which ensure the existence of the solutions for the inverted general variational inequality of Stampacchia type, are provided. Existence results for  problem $(\ref{e4.1.2})$ will be helpful in the sense that one would like to know if a solution of  problem $(\ref{e4.1.2})$ exists before one actually devises some plausible algoritheorems for solving  the problem. Moreover, we furnish  some sufficient conditions which ensure that the set of solutions of the inverted general variational inequalitiy of Stampacchia type and the set of solutions of the inverted general variational inequality of Minty type coincide. In Section 4,  we show that Brower's fixed point theorem is an easy consequence of our results.

\section{Preliminaries}

 In what follows we present the notion of KKM mapping, which was initially introduced by Knaster, Kuratowski and Mazurkiewicz, and will be very useful in establishing the existence of the solutions for the problem $(\ref{e4.1.2})$.

 Let $X$ be a real linear space and let $D\subseteq X.$ Recall that the convex hull of the set  $D$ is defined as the set
\[co (D):=\left\{\sum_{i=1}^n\l_i x_i: x_i\in D,\, \sum_{i=1}^n\l_i=1,\, \l_i\ge 0,\,\forall\, i\in\{1,2,\ldots,n\},\, n\in \Natural\right\}.\]

 \begin{defn}\rm(Knaster-Kuratowski-Mazurkiewicz) Let $X$ be a Hausdorff linear space and let $M\subseteq X.$ The application $G:M\rightrightarrows X$ is called a KKM mapping, if for every finite number of elements $x_1,x_2,\dots,x_n\in M$ one has $co\{x_1,x_2,\ldots,x_n\}\subseteq \bigcup_{i=1}^n G(x_i).$
\end{defn}

The following result is due to  Ky Fan (see \cite{Fan}).
\begin{lem}\label{Fan} Let $X$ be a Hausdorff linear space,  $M\subseteq X$ and $G:M\rightrightarrows X$ be a KKM mapping. If $G(x)$ is closed for every $x\in M$, and there exists $x_0\in M,$ such that $G(x_0)$ is compact, then
$$\bigcap_{x\in M}G(x)\neq\emptyset.$$
\end{lem}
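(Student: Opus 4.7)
The plan is to reduce the seemingly infinite intersection problem to a finite one using the compactness assumption, and then settle the finite case by a combinatorial (Sperner) argument. I would deliberately avoid invoking Brouwer's fixed point theorem, since Section 4 of the paper is going to derive Brouwer from this very lemma.

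First I would exploit the finite intersection property characterization of compactness. The sets $\{G(x) \cap G(x_0) : x \in M\}$ form a family of closed subsets of the compact space $G(x_0)$. Hence it suffices to show that every finite subfamily has nonempty intersection, i.e.\ that for any finitely many $x_1,\ldots,x_n \in M$ the set $G(x_0) \cap \bigcap_{i=1}^n G(x_i)$ is nonempty.

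Next I would pass to the compact finite-dimensional simplex $\sigma = co\{x_0,x_1,\ldots,x_n\}$; since $X$ is a Hausdorff topological linear space, the finite-dimensional affine hull of these points carries its usual Euclidean topology, so $\sigma$ is compact. The key step is the classical finite-dimensional KKM lemma, established via Sperner's lemma: take the $k$-th barycentric subdivision of $\sigma$, and label each vertex $v$ of the subdivision with some index $j \in \{0,1,\ldots,n\}$ for which $v \in G(x_j)$. Such a label exists because $v \in co\{x_{i_1},\ldots,x_{i_m}\}$ for some subset of the original vertices, and the KKM hypothesis gives $v \in \bigcup_{\ell} G(x_{i_\ell})$; moreover one can always pick the label from $\{i_1,\ldots,i_m\}$, so the labeling is admissible for Sperner. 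Sperner's lemma then produces a fully labeled small subsimplex, with vertices $v_k^{(0)},\ldots,v_k^{(n)}$ satisfying $v_k^{(j)} \in G(x_j)$ and $\mathrm{diam}\{v_k^{(0)},\ldots,v_k^{(n)}\} \to 0$ as $k \to \infty$. By compactness of $\sigma$ one extracts a subsequential limit point $x^*$, which, by closedness of each $G(x_j)$, lies in $\bigcap_{j=0}^n G(x_j)$.

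The main obstacle is the admissibility of the Sperner labeling: one must check that on each face $co\{x_{i_1},\ldots,x_{i_k}\}$ of $\sigma$ every vertex of the subdivision receives a label from $\{i_1,\ldots,i_k\}$. This is exactly what the KKM condition $co\{x_{i_1},\ldots,x_{i_k}\} \subseteq \bigcup_\ell G(x_{i_\ell})$ supplies. Once this bookkeeping is in place, the compactness-plus-closedness argument that upgrades the combinatorial output to an actual common point is routine, and the finite intersection property then yields the conclusion $\bigcap_{x \in M} G(x) \neq \emptyset$.
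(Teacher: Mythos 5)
The paper does not prove this lemma at all: it is imported verbatim as a known result of Ky Fan, with a citation to his 1961 paper, so there is no ``paper's proof'' to compare against. Your argument is the classical self-contained proof (essentially Fan's own route: finite intersection property to reduce to finitely many points, then the Knaster--Kuratowski--Mazurkiewicz theorem on a finite-dimensional simplex via Sperner's lemma), and its overall structure is sound: the reduction using closedness of each $G(x)\cap G(x_0)$ inside the compact set $G(x_0)$ is correct, the Hausdorff hypothesis is used exactly where you use it (to identify the finite-dimensional affine hull with a Euclidean space, making $\sigma$ compact), and the admissible-Sperner-labeling plus shrinking-diameter plus closedness argument is the standard and correct way to finish. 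The one point you should tighten is your identification of $\sigma=co\{x_0,x_1,\ldots,x_n\}$ with an $n$-simplex: the points $x_0,\ldots,x_n$ need not be affinely independent (they could even coincide), in which case ``the $k$-th barycentric subdivision of $\sigma$'' and Sperner's lemma do not literally apply to $\sigma$. The routine fix is to run the combinatorial argument on the standard simplex $\Delta^n=co\{e_0,\ldots,e_n\}\subseteq\Real^{n+1}$ and transport the sets back via the continuous affine surjection $f:\Delta^n\To\sigma$, $f(e_i)=x_i$: the preimages $f^{-1}\bigl(G(x_i)\bigr)$ are closed and inherit the KKM property because $f$ maps $co\{e_{i_1},\ldots,e_{i_m}\}$ onto $co\{x_{i_1},\ldots,x_{i_m}\}$, so a common point of the preimages maps to a common point of $G(x_0),\ldots,G(x_n)$. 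With that adjustment the proof is complete.
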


 Let $X$ be a real linear space. For $x,y\in X$ let us denote by $[x,y]:=\{z=(1-t)x+ty:t\in[0,1]\}$ the closed line segment with the endpoints  $x$ respectively $y.$ The open line segment with the endpoints $x$ respectively $y$ is defined by $(x,y):=[x,y]\setminus\{x,y\}=\{z=(1-t)x+ty:t\in(0,1)\}.$ The following concept, was introduced in \cite{La}.

\begin{defn}\rm\label{d1}Let $X$ and $Y$ be two real linear spaces. One says that the operator $A:D\subseteq X\To Y$ is of type ql, if for every $x,y\in D$ and every $z\in [x,y]\cap D$ one has $A(z)\in[A(x),A(y)].$ One says that $A$ is of type strict ql, if for every $x,y\in D,\,x\neq y$ and every $z\in (x,y)\cap D$ one has $A(z)\in(A(x),A(y)).$
\end{defn}

Obviously an operator of type strict ql is also of type ql, but the converse is not true. It can be easily realized, that these concepts are generalizations of the notions of monotonicity, respectively strict monotonicity of  real valued functions of one real variable. It is also easy to observe, that these notions may be viewed as generalizations of the concept of linear operator  as well. For a nontrivial example of operator of type ql see \cite{La}.

 Let $X$ be a real linear space, let $Y$ be a topological space, and let $A:D\subseteq X\To Y$ be an operator. Recall, that we say that $A$ is continuous on line segments at $x\in D,$ if for every sequence  $\{t_n\}\subseteq \Real$ of real numbers convergent to $0,$ and every $y\in D$ with $x+t_ny\in D$, we have $A(x+t_ny)\To A(x),\, n\To\infty.$ We say, that $A$ is continuous on line segments in $D,$ if it has this continuity property at every $x\in D.$ If $X$ is a Banach space and $Y=X^*$ is its dual, then the above defined continuity is called hemicontinuity.
  We will need the following result from \cite{La}.

 \begin{thm}\label{t1}(Theorem 3.2 \cite{La}) Let $X$ and $Y$ be two real linear spaces, let $D\subseteq X$ be convex, and let $A:D\To Y$ be an operator of type ql. Then for every $n\in\Natural,$ every $x_1,x_2,\ldots,x_n\in D,$ and every $x\in co\{x_1,x_2,\ldots,x_n\},$ we have $A(x)\in co\{A(x_1),A(x_2),\ldots,A(x_n)\}.$
\end{thm}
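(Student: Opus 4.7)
The plan is to proceed by induction on $n$, using the ql property to handle the two-point case and then building up by splitting off one coordinate at a time. Convexity of $D$ will be essential to ensure that the intermediate convex combinations we form still lie in the domain, so that the ql property is actually applicable.

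For the base case $n=1$ the claim is trivial, and for $n=2$ the set $co\{x_1,x_2\}=[x_1,x_2]$ is contained in $D$ by convexity, so the definition of type ql gives $A(z)\in[A(x_1),A(x_2)]=co\{A(x_1),A(x_2)\}$ directly. Assume the statement holds for $n-1$, and let $x=\sum_{i=1}^{n}\lambda_i x_i$ with $\lambda_i\ge 0$ and $\sum_{i=1}^n\lambda_i=1$. The degenerate cases $\lambda_n=1$ (giving $x=x_n$) and $\lambda_n=0$ (where $x\in co\{x_1,\dots,x_{n-1}\}$ and the induction hypothesis applies directly, after noting $co\{A(x_1),\dots,A(x_{n-1})\}\subseteq co\{A(x_1),\dots,A(x_n)\}$) are immediate.

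In the remaining case $0<\lambda_n<1$, set $y:=\sum_{i=1}^{n-1}\frac{\lambda_i}{1-\lambda_n}x_i$. Then $y\in co\{x_1,\dots,x_{n-1}\}\subseteq D$ (this is where convexity of $D$ enters), and by the induction hypothesis $A(y)\in co\{A(x_1),\dots,A(x_{n-1})\}$, so $A(y)=\sum_{i=1}^{n-1}\mu_i A(x_i)$ for some convex coefficients $\mu_i$. Now observe that $x=(1-\lambda_n)y+\lambda_n x_n\in[y,x_n]\subseteq D$; applying the ql property to the pair $y,x_n$ yields $A(x)\in[A(y),A(x_n)]$, i.e.\ $A(x)=(1-t)A(y)+tA(x_n)$ for some $t\in[0,1]$. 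Substituting the expression for $A(y)$ gives
\[
A(x)=\sum_{i=1}^{n-1}(1-t)\mu_i A(x_i)+t\,A(x_n),
\]
which is a convex combination of $A(x_1),\dots,A(x_n)$, completing the induction.

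The only real subtlety — and the reason the hypothesis that $D$ is convex is needed rather than just that $\{x_1,\dots,x_n\}\subseteq D$ — is the need to know that $y$ and the segment $[y,x_n]$ lie in $D$ so that the ql property (which only speaks about points $z\in[x,y]\cap D$) has something to say. Once that is handled, the argument is a mechanical reduction of an $n$-fold convex combination to two nested convex combinations of length two.
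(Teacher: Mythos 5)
Your proof is correct: the induction on $n$, splitting off $x_n$ and applying the ql property to the segment $[y,x_n]$ (which lies in $D$ by convexity), is exactly the standard argument, and you rightly flag the one subtle point, namely that convexity of $D$ is what makes the ql hypothesis applicable to the auxiliary point $y$ and the segment $[y,x_n]$. Note that the present paper does not prove this statement at all --- it is quoted as Theorem 3.2 of the cited reference \cite{La} --- so there is no in-paper proof to compare against, but your argument is the natural one and fills that gap correctly.
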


Further, we need the following definitions.

\begin{defn}\rm Let $X$ be a real Banach space, $X^*$ be its topological dual, and let $A:D\subseteq X\To {X^*}$ be an operator. We say that $A$ is monotone (in Minty-Browder sense, see for instance, \emph{\cite{Bro,Min}}) if for every $x,y\in D$ one has $\<A(x)-A(y),x-y\>\ge 0.$ We say that $A$ is pseudomonotone (see $\emph{\cite{Gia-Mau,La1}}$), if $\<A(x),y-x\>\ge0$ implies $\<A(y),y-x\>\ge 0$  for all $x,y\in K.$
\end{defn}

Generalizing these concepts, we have the following definitions (see \cite{Nor2}).
 \begin{defn}\rm  Let $X$ be a real Banach space, $X^*$ be its topological dual, and let $A:D\subseteq X\To {X^*}$ and $a:D\To X$ be given operators.  We say that $A$ is monotone relative to $a,$ if for all $x,y\in D,$ we have $\<A(x)-A(y),a(x)-a(y)\>\ge 0.$
  We say that $A$ is $a$-pseudomonotone, if  $\<A(x),a(y)-a(x)\>\ge0$ implies $\<A(y),a(y)-a(x)\>\ge 0$  for all $x,y\in D.$
\end{defn}
\begin{rmk}\rm\label{r4.1}
Obviously, if $a\equiv id_D$ we obtain the definition of the Minty-Browder monotonicity, respectively, pseudomonotonicity. It is well known, (see \emph{\cite{Gia-Mau}}), that monotonicity implies pseudomonotonicity, but the converse is not true.
\end{rmk}

\section{Existence of solutions for inverted problems}
 In this section we obtain an existence result for the inverted general variational inequality of Stampacchia type, $VI_{iS}(A,a,K)$.  Further, we establish a result, that may be viewed as a generalization of Minty's theorem for the inverted problems $VI_{iS}(A,a,K)$ and $VI_{iM}(A,a,K).$

Let $X$ and $Y$ be two Banach spaces. Recall that an operator $T:X\To Y$ is called weak to norm sequentially continuous at $x\in X$, if for every sequence $x_n$ that converges weakly to $x,$ we have that $T(x_n)\To T(x)$ in the topology of the norm of $Y.$ An operator $T:X\To Y$ is called weak to weak-sequentially continuous at $x\in X$, if for every sequence $x_n$ that converges weakly to $x,$ we have that $T(x_n)$ converges to $T(x)$ in the weak topology of $Y.$
We have the following existence result for the problem $VI_{iS}(A,a,K).$

\begin{thm}\label{t4.3.7} If $A$ is weak to norm sequentially continuous and is of type ql, $a$ is weak to weak sequentially continuous, and $K$ is weakly compact and convex, then the inverted general variational inequality of Stampacchia type $VI_{iS}(A,a,K)$ admits solutions.
\end{thm}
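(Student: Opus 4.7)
The plan is to apply Ky Fan's KKM lemma (Lemma \ref{Fan}) in the weak topology of $X$, which is a Hausdorff linear space. Introduce the set-valued mapping $G:K\rightrightarrows K$ defined by
$$G(y):=\{x\in K:\<A(y)-A(x),a(x)\>\ge 0\}.$$
Since $y\in G(y)$ trivially, each $G(y)$ is nonempty, and a solution of $VI_{iS}(A,a,K)$ is exactly a point in $\bigcap_{y\in K}G(y)$. Hence it suffices to show that $G$ is KKM and that each $G(y)$ is weakly closed; weak compactness then follows because $G(y)\subseteq K$.

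For the KKM property, I would take $x_1,\ldots,x_n\in K$ and $z\in co\{x_1,\ldots,x_n\}$. Since $A$ is of type ql, Theorem \ref{t1} supplies scalars $\mu_1,\ldots,\mu_n\ge 0$ with $\sum_{i=1}^n\mu_i=1$ and $A(z)=\sum_{i=1}^n\mu_iA(x_i)$. Therefore
$$\sum_{i=1}^n\mu_i\<A(x_i)-A(z),a(z)\>=\<A(z)-A(z),a(z)\>=0,$$
forcing at least one summand to be nonnegative, so that $z\in G(x_i)$ for some $i$.

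For the weak closedness of $G(y)$, I would invoke the Eberlein-\v{S}mulian theorem, which allows us to argue sequentially on the weakly compact set $K$. Let $\{x_n\}\subseteq G(y)$ with $x_n\rightharpoonup x$ weakly. Weak-to-norm sequential continuity of $A$ gives $\|A(x_n)-A(x)\|\to 0$ in $X^*$, weak-to-weak sequential continuity of $a$ gives $a(x_n)\rightharpoonup a(x)$ in $X$, and $\{a(x_n)\}$ is norm bounded by the uniform boundedness principle. The identity
$$\<A(y)-A(x_n),a(x_n)\>-\<A(y)-A(x),a(x)\>=\<A(x)-A(x_n),a(x_n)\>+\<A(y)-A(x),a(x_n)-a(x)\>$$
shows that both error terms tend to zero: the first by norm convergence of $A(x_n)$ paired with the bounded sequence $\{a(x_n)\}$, the second by weak convergence of $a(x_n)$ tested against the fixed functional $A(y)-A(x)$. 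Passing to the limit in the inequality defining $G(y)$ yields $x\in G(y)$.

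Lemma \ref{Fan} then gives $\bigcap_{y\in K}G(y)\neq\emptyset$, producing a solution. The main delicate step I anticipate is the limit passage in the duality pairing: the asymmetric continuity hypotheses (norm on $A$, weak on $a$) are calibrated precisely to neutralize the two cross terms above, and a weaker assumption on either factor would leave one of those terms uncontrolled; this is why a ql hypothesis on $A$ plus continuity suffices where one might otherwise need monotonicity.
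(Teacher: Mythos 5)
Your proposal is correct and follows essentially the same route as the paper: the same KKM mapping $G(y)=\{x\in K:\<A(y)-A(x),a(x)\>\ge 0\}$, the same use of Theorem \ref{t1} for the KKM property, the same splitting of $\<A(y)-A(x_n),a(x_n)\>$ into a term controlled by the norm convergence of $A(x_n)$ against the bounded sequence $a(x_n)$ and a term controlled by the weak convergence of $a(x_n)$, and the same Eberlein--\v{S}mulian plus Ky Fan conclusion. The only point worth tightening is in the KKM step: a nonnegative summand $\mu_i\<A(x_i)-A(z),a(z)\>$ with $\mu_i=0$ does not by itself give $z\in G(x_i)$, so you should restrict attention to indices with $\mu_i>0$ (equivalently, argue by contradiction as the paper does, noting that if every bracket were strictly negative the sum would be strictly negative rather than zero).
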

\begin{proof}
Considering the application $G:K\rightrightarrows K$, $G(y):=\{x\in K: \<A(y)-A(x),a(x)\>\ge 0\}$
the inverted general variational inequality of Stampacchia type $VI_{iS}(A,a,K)$ reduces to finding an element $x\in\bigcap_{y\in K}G(y).$

We show, that $G$ is a KKM mapping and the assumptions of Fan's lemma are satisfied.
Let $y_1,y_2,...,y_n\in K$ and $y\in co\{y_1,y_2,...,y_n\}.$ Let us suppose that $ y\not\in \bigcup_{i=1}^n G(y_i).$ Then,
 $\<A(y_1)-A(y),a(y)\><0$, $\<A(y_2)-A(y),a(y)\><0$,\ldots,$\<A(y_n)-A(y),a(y)\><0$.

Since $A$ is of type ql and $y\in co\{y_1,y_2,...,y_n\}$, according to Theorem \ref{t1}  we have
 $A(y)\in co\{A(y_1),A(y_2),...,A(y_n)\},$ hence, there exists $\l_i\ge 0,\, i=\overline{1,n}$ where $\sum_{i=1}^n\l_i=1,$ such that $ A(y)=\sum_{i=1}^n\l_i A(y_i).$

We have $\sum_{i=1}^n\l_i\<A(y_i)-A(y),a(y)\><0.$ On the other hand\\ $\sum_{i=1}^n\l_i\<A(y_i)-A(y),a(y)\>=\sum_{i=1}^n\<\l_i(A(y_i)-A(y)),a(y)\>=0,$ contradiction. Hence, $G$ is a KKM mapping.

We show next, that $G(y)$ is weakly compact for all $y\in K.$
Obviously $G(y)\neq\emptyset,$ since for all $y\in K$ we have $y\in G(y).$ For $y\in K$ consider a sequence $(x_n)_{n\in\Natural}\subseteq G(y)$ that converges weakly to $x\in K.$

 We have $\<A(y)-A(x_{n}),a(x_n)\>\ge 0$ for all $n\in \Natural.$ But, $\<A(y)-A(x_{n}),a(x_n)\>=\<A(y)-A(x),a(x_n)\>+\<A(x)-A(x_{n}),a(x_n)\>.$  Since $a$ is weak to weak sequentially continuous, obviously, $\<A(y)-A(x),a(x_n)\>\To \<A(y)-A(x),a(x)\>,\, n\To\infty.$ By the triangle inequality we get $|\<A(x)-A(x_{n}),a(x_n)\>|\le\|A(x)-A(x_{n})\|\|a(x_{n})\|.$ But, the sequence $(a(x_{n}))_{n\in\Natural}$  converges weakly to $a(x)$ and $x\in K,$ hence, $(a(x_{n})_{n\in\Natural}$  it is bounded.  Let $M>0,$ such that $\|a(x_{n})\|\le M,$ for all $n\in\Natural.$ It results that  $|\<A(x)-A(x_{n}),a(x_n)\>|\le M\|A(x)-A(x_{n})\|$ for all $n\in\Natural,$ and taking the limit $n\To\infty,$ and taking into account that $A$ is weak to norm-sequentially continuous, we get that $\<A(y)-A(x_{n}),a(x_n)\>\To 0,\,n\To\infty.$

Thus,  $\<A(y)-A(x_{n}),a(x_n)\>\To\<A(y)-A(x),a(x)\>,\, n\To\infty$, and from $\<A(y)-A(x_{n}),a(x_n)\>\ge 0$ for all $n\in \Natural,$ we get that $\<A(y)-A(x),a(x)\>\ge 0.$ Hence, $x\in G(y),$ which means that $G(y)$ is weakly sequentially closed for all $y\in K.$

 We prove next that $G(y)$ is weakly compact, for all $y\in K.$ For fixed $y$ let $(x_n)\subseteq G(y).$ Since $G(y)\subseteq K,$ we have $(x_n)\subseteq K.$ But $K$ is weakly compact, and according to Eberlein-\u{S}mulian theorem (see, for instance, \cite{fabian}), $K$ is weakly sequentially compact. Thus, there exists a subsequence $(x_{n_k})$ of $(x_n)$ convergent to a point $x\in K.$ But then, $(x_{n_k})\subseteq G(y),$ and from the weak sequentially closedness of $G(y)$ results that $x\in G(y).$ Hence, $G(y)$ is weakly sequentially compact, and according to Eberlein-\u{S}mulian theorem is weakly compact.
So we have that $G(y)$ is weakly compact for all $y\in K,$ which obviously implies that is weakly closed as well.

Hence, $G$ is a KKM mapping that satisfies the assumptions of Ky Fan's lemma.

Consequently, $\bigcap_{y\in K} G(y)\neq\emptyset,$ hence, there exists $x\in K$ such that $\<A(y)-A(x),a(x)\>\ge 0$ for all $y\in K.$
\end{proof}

\begin{rmk}\rm\label{r4.3.6} It can be shown in a similar way to the proof of Theorem $4.1$ from \cite{La}, that if $A$ is weak to weak sequentially continuous and is of type ql, $a$ is weak to norm sequentially continuous, and $K$ is weakly compact and convex, then $VI_{iS}(A,a,K)$ admits solutions. The next example shows, that without the assumption that the operator $A$ is of type ql, this conclusion, and also the conclusion of Theorem $\ref{t4.3.7}$, fails even in finite dimension.
\end{rmk}

\begin{ex}\rm\label{ex4.3.2} Let us consider the operators $a:K\To\Real^2,\, a(x,y)=(1,-x),$ and $\ds A:K\To K,\, A(x,y)=\left(x^2 y,xy\right),$ where $K=[-1,1]\times[-1,1]\subseteq\Real^2$. Then obviously  $a$ and $A$ are (norm to norm) continuous, $K$ is compact and convex but the inverted general variational inequality of Stampacchia type has no solutions.
\end{ex}

Indeed $A$ is not of type ql, since for $\ds\left(\frac12,\frac12\right)\in[(0,0),(1,1)]$ we have $\ds A\left(\frac12,\frac12\right)=\left(\frac18,\frac14\right)\not\in[(0,0),(1,1)]=[A(0,0),A(1,1)].$ Hence, all the assumptions of the Theorem \ref{t4.3.7} are satisfied excepting the one, that $A$ is of type ql.

 Let us suppose that there exists $(x,y)\in K,$ a solution of the problem $VI_{iS}(A,a,K).$ Then for every $(u,v)\in K$ we have $\<A(u,v)-A(x,y),a(x,y)\>\ge 0$ or equivalently $uv(u-x)\ge 0.$ For $u=v=-1$ we get $-1-x\ge 0,$ but $x\in[-1,1],$ hence $x=-1.$

 For $u=1,\,v=-1$ we get $x-1\ge 0,$ but $x\in[-1,1],$ hence $x=1.$ Contradiction.

In what follows, we establish some results for the problems $VI_{iS}(A,a,K)$ and $VI_{iM}(A,a,K)$, that may be viewed as generalizations of Minty's theorem.
Recall that Minty's theorem states, that if the operator $A:K\To X^*$ is hemicontinuous and monotone in Minty-Browder sense, then the solutions of Stampacchia variational inequality, $VI_S(A,K)$, and the solutions of Minty variational inequality, $VI_M(A,K)$, coincide (see \cite{Fe}). Actually, one needs to assume less.

\begin{thm}(Minty) Let $A:K \To X^* $ be an operator.
\begin{itemize}
\item[i)] If $A$ is hemicontinuous on $K$, and $K$ is convex, then every $x\in K$ which
solves $VI_M(A,K)$ is also a solution of $VI_S(A,K).$
\item[ii)] If, instead, $A$ is monotone on the convex set $K$,
then every $x\in K$ which solves $VI_S(A,K)$ is also a solution of $VI_M(A,K).$
\end{itemize}
\end{thm}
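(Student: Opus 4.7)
The plan is to handle the two implications separately. Part (ii) is the short direction and Part (i) is the standard ``slide along the segment'' argument, so I will describe them in that order.

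For part (ii), suppose $x\in K$ solves $VI_S(A,K)$, i.e.\ $\<A(x),y-x\>\ge 0$ for every $y\in K$. The monotonicity of $A$ gives $\<A(y)-A(x),y-x\>\ge 0$ for every $y\in K$, so adding the two inequalities yields $\<A(y),y-x\>\ge \<A(x),y-x\>\ge 0$. Hence $x$ solves $VI_M(A,K)$. There is nothing else to check; convexity of $K$ is not even needed here.

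For part (i), suppose $x\in K$ solves $VI_M(A,K)$, i.e.\ $\<A(y),y-x\>\ge 0$ for all $y\in K$. Fix an arbitrary $y\in K$ and, using convexity of $K$, for each $t\in(0,1]$ set $y_t:=(1-t)x+ty\in K$. Apply the Minty inequality at the test point $y_t$: since $y_t-x=t(y-x)$, we obtain
\[
0\le \<A(y_t),y_t-x\>=t\<A(y_t),y-x\>,
\]
and dividing by $t>0$ gives $\<A(y_t),y-x\>\ge 0$ for every $t\in(0,1]$. Now let $t\to 0^+$; then $y_t=x+t(y-x)$ is a sequence of the form appearing in the paper's definition of continuity on line segments, so hemicontinuity of $A$ yields $A(y_t)\to A(x)$ in the topology of $X^*$. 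Since $y-x\in X$ is fixed, the linear evaluation functional $x^*\mapsto \<x^*,y-x\>$ is continuous for the relevant topology on $X^*$, and passing to the limit gives $\<A(x),y-x\>\ge 0$. As $y\in K$ was arbitrary, $x$ solves $VI_S(A,K)$.

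The only step that deserves a word of caution is the limit passage in part (i): one must know that $A(y_t)\to A(x)$ converges in a topology on $X^*$ strong enough to make the pairing with the fixed vector $y-x$ continuous. Under the paper's definition of hemicontinuity this holds (the norm and the weak-$*$ topology both suffice, since $y-x$ is a fixed element of $X$). Apart from this, the argument is entirely routine and uses no deeper machinery than the convexity of $K$ and bilinearity of the duality pairing.
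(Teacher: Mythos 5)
Your proof is correct, and it is the standard argument for Minty's lemma: part (ii) is the one-line addition of the Stampacchia inequality and the monotonicity inequality, and part (i) is the usual trick of testing the Minty inequality at $y_t=(1-t)x+ty$, cancelling the factor $t$, and letting $t\downarrow 0$ via hemicontinuity. Note that the paper itself states this theorem without proof (it is recalled from the literature, with a citation), so there is nothing to compare line by line; but your argument is exactly the classical template that the paper later mirrors in its own Theorem 3.3 for the inverted problems, where the roles are swapped: there the ``factor out the scalar'' step is supplied by the strict ql property of $A$ rather than by bilinearity in the second slot, and the limit passage uses continuity of $a$ on segments rather than hemicontinuity of $A$. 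Your closing remark about the topology on $X^*$ is the right point to flag: since $y-x$ is a fixed vector, weak-$*$ convergence of $A(y_t)$ already suffices, so the argument is insensitive to which reasonable topology the paper's (somewhat loosely stated) definition of hemicontinuity refers to. No gaps.
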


We have the following Minty type theorem.

\begin{thm}\label{t4.3.8} Let $K\subseteq X$ convex, and let $A:K\To X^*$ and $a:K\To X$ be two given operators.
 \begin{itemize}
 \item[i)]  If $A$ is monotone relative to $a$ and $x\in K$ is a solution of the inverted general variational inequality of Stampacchia type $VI_{iS}(A,a,K),$ then $x$ is a solution of the inverted general variational inequality of Minty type $VI_{iM}(A,a,K).$
 \item[ii)] If $A$ is of type strict ql and  $a$ is continuous on line segments, then every a solution of the inverted general variational inequality of Minty type $VI_{iM}(A,a,K),$ is also solution of the inverted general variational inequality of Stampacchia type $VI_{iS}(A,a,K).$
 \end{itemize}
\end{thm}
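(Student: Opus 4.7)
The plan is to treat the two implications separately by elementary algebraic manipulations, using an affine segment argument (à la Minty) for the harder direction (ii).

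For part (i), I would start from the identity
\[
\langle A(y)-A(x),a(y)\rangle = \langle A(y)-A(x),a(y)-a(x)\rangle + \langle A(y)-A(x),a(x)\rangle,
\]
valid for every $y\in K$. The first summand on the right-hand side is nonnegative because $A$ is monotone relative to $a$, and the second summand is nonnegative because, by hypothesis, $x$ solves $VI_{iS}(A,a,K)$. Adding gives $\langle A(y)-A(x),a(y)\rangle\ge 0$ for every $y\in K$, i.e.\ $x$ solves $VI_{iM}(A,a,K)$. This step is routine.

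For part (ii), let $x\in K$ solve $VI_{iM}(A,a,K)$ and fix an arbitrary $y\in K$; the case $y=x$ is trivial. For $t\in(0,1)$ set $z_t:=(1-t)x+ty\in K$ (convexity of $K$); then $z_t\in(x,y)$ with $x\neq y$, so by the strict ql property of $A$ there is $s_t\in(0,1)$ with
\[
A(z_t)=(1-s_t)A(x)+s_t A(y).
\]
Because $x$ solves $VI_{iM}(A,a,K)$, $\langle A(z_t)-A(x),a(z_t)\rangle\ge 0$. Substituting the representation of $A(z_t)$ gives
\[
s_t\langle A(y)-A(x),a(z_t)\rangle\ge 0,
\]
and dividing by the strictly positive $s_t$ yields $\langle A(y)-A(x),a(z_t)\rangle\ge 0$ for every $t\in(0,1)$.

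The final step is to let $t\to 0^{+}$. Since $z_t=x+t(y-x)$ and $a$ is continuous on line segments (at $x$), we have $a(z_t)\to a(x)$. The linear functional $A(y)-A(x)\in X^*$ is continuous, hence passing to the limit gives $\langle A(y)-A(x),a(x)\rangle\ge 0$, which is exactly $VI_{iS}(A,a,K)$ at $x$. The main obstacle I anticipate is precisely here: the strictness of the ql hypothesis is essential to guarantee $s_t>0$ (if one only knew $A$ was of type ql, one could have $A(z_t)=A(x)$, and the argument would collapse to $0\ge 0$, carrying no information about $\langle A(y)-A(x),a(x)\rangle$); and the line-segment continuity of $a$ is exactly what is needed to pass to the limit along the segment $[x,y]$ without any global continuity assumption.
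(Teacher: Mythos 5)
Your proposal is correct and follows essentially the same route as the paper: part (i) is the same one-line decomposition using relative monotonicity, and part (ii) is the same Minty-style segment argument, writing $A(z_t)$ as a strict convex combination of $A(x)$ and $A(y)$, dividing by the positive coefficient, and letting $t\downarrow 0$ using the line-segment continuity of $a$. Your explicit remarks that the case $y=x$ is trivial and that strictness is needed to ensure the coefficient is nonzero are small but welcome clarifications of points the paper leaves implicit.
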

\begin{proof} "i)" Let $x\in K$ be a solution of the inverted general variational inequality of Stampacchia type $VI_{iS}(A,a,K).$ Then $\<A(y)-A(x),a(x)\>\ge 0,\,\mbox{\,for all\,}\, y\in K.$ Since $A$ is monotone relative to $a$ we have $\<A(y)-A(x),a(y)-a(x)\>\ge 0$ for all $y\in K$ or equivalently $\<A(y)-A(x),a(y)\>\ge\<A(y)-A(x),a(x)\>,$ hence  $x$ is a solution of the inverted general variational inequality of Minty type $VI_{iM}(A,a,K).$ It can be observed that we did not used the fact that $K$ is convex.

"ii)" Let $x\in K$ be a solution of the inverted general variational inequality of Minty type $VI_{iM}(A,a,K).$ Then $\<A(y)-A(x),a(y)\>\ge 0,\,\mbox{\,for all\,}\, y\in K.$ Let $z\in K.$ Since $K$ is convex, we have $y=x+t(z-x)\in K$ for all $t\in(0,1).$ Since $A$ is of type strict ql we have $A(y)\in(A(x),A(z))$ that is, $A(y)=A(x)+\a(A(z)-A(x))$ for some $\a\in(0,1).$ But $\<A(y)-A(x),a(y)\>\ge 0$ or equivalently $\<\a(A(z)-A(x)),a(x+t(z-x))\>\ge 0.$ By simplifying with $\a$ and taking the limit $t\downarrow 0,$ since $a$ is continuous on line segments we obtain $\<A(z)-A(x),a(x)\>\ge 0.$ But $z$ is arbitrary, hence $\<A(z)-A(x),a(x)\>\ge 0$ for all $z\in K,$ or in other words, $x$ is a solution of the inverted general variational inequality of Stampacchia type $VI_{iS}(A,a,K).$
\end{proof}
\begin{rmk}\rm It can be easily observed from the proof, that in Theorem \ref{t4.3.8} i) we can replace the assumption that the operator $A$ is monotone relative to $a$, with the assumption of a-pseudomonotonicity of $A.$
\end{rmk}
The condition that the operator $A$ is of type ql in the hypothesis of Theorem \ref{t4.3.8} ii) is essential as the next example shows.
\begin{ex}\rm\label{ex4.3.4} Let  us consider the functions $A:K\To[0,1]$ and $a:K\To K,$ $A(x)=\left\{
\begin{array}{lll}
-2x-1,\,\mbox{if } x\in\left[-1,-\ds\frac12\right],\\
2x+1,\,\mbox{if } x\in\left(-\ds\frac12,0\right],\\
-2x+1,\,\mbox{if } x\in\left(0,1\right],\\
\end{array}
\right.$ and
$ a(x)=\left\{
\begin{array}{ll}
-\ds\frac23 x+\frac13,\,\mbox{if } x\in\left[-1,\ds\frac12\right],\\
-2x+1,\,\mbox{if } x\in\left(\ds\frac12,1\right],\\
\end{array}
\right.$ where the set $K=[-1,1]$ is obviously convex.
\end{ex}
It can be easily checked that $A$ respective $a$ are continuous. We show that $A$ is not of type ql, and  $x_0=-\ds\frac12\in K$ is a solution of the inverted general variational inequality of Minty type, $VI_{iM}(A,a,K)$, but is not a solution of the inverted variational general inequality of Stampacchia type, $VI_{iS}(A,a,K).$

Indeed, since $A$ is not a monotone function, according to Proposition $3.2$ from \cite{La}, $A$ is not of type ql.
Easily can be verified that $\left(A(y)-A\left(x_0\right)\right)\cdot a(y)\ge 0$ for all $y\in K,$ hence $x_0$ is a solution of the inverted general variational inequality of Minty type, $VI_{iM}(A,a,K).$ On the other hand, for $y=\ds\frac34\in K$ we obtain $\left(A(y)-A\left(x_0\right)\right)\cdot a(x_0)<0$ 
   which shows that $x_0$ is not a solution of the inverted general variational inequality of Stampacchia type, $VI_{iS}(A,a,K)$.

 Even more, the condition that the operator $A$ is of type strict ql in the hypothesis of Theorem \ref{t4.3.8}  ii) is also essential.
 \begin{ex}\label{ex4.3.31}\rm Let $K=[-1,1],$ $A,a:K\To\Real,\, A(x)=\left\{\begin{array}{ll} -1,\,\mbox{if\,} x\in[-1,0),\\ 1,\,\mbox{if\,} x\in[0,1],\end{array}\right.\, \mbox{ and }$\\$a(x)=x.$
It can be easily verified that $K$ is convex, $a$ is continuous, $A$ is of type ql, but is not of type strict ql. We show that $x_0=\ds\frac12$ is a solution of $VI_{iM}(A,a,K)$ but is not solution of $VI_{iS}(A,a,K).$
 \end{ex}
 Indeed $(A(y)-A(x_0))\cdot a(y) =(A(y)-1)\cdot y=\left\{\begin{array}{ll} -2y,\,\mbox{if\,} y\in[-1,0),\\ 0,\,\mbox{if\,} y\in[0,1].\end{array}\right.$
Hence, $(A(y)-A(x_0))\cdot a(y)\ge0$ for all $y\in[-1,1]$, consequently  $x_0$ is a solution of $VI_M(A,a,K).$
On the other hand, for $y=-\ds\frac12,$ we have $(A(y)-A(x_0))\cdot a(x_0)<0$, consequently  $x_0$ is not a solution of $VI_S(A,a,K).$

\section{Applications}

In this section we prove Brouwer's fixed point theorem. Recall, that Brouwer's fixed point theorem states, that if $F:K\To K$ is a continuous function, where $K\subseteq\Real^n$ is a compact and convex, then $F$ admits a fixed point, that is, there exists $x\in K,$ such that $F(x)=x$ (see, for instance, \cite{Ist,Ru-Pe}).

Now, according to Theorem \ref{t4.3.7}, if $A$ is weak to norm sequentially continuous and is of type ql, $a$ is weak to weak sequentially continuous, and $K$ is weakly compact and convex, then the inverted general variational inequality of Stampacchia type $VI_{iS}(A,a,K)$ admits solutions. Let $K\subseteq \Real^n$ compact and convex, $A:K\To K,\, A\equiv id_K$ and $a:K\to\Real^n,\, a(x)=x-F(x).$ Obviously $A$ and $a$ are continuous. Hence, the assumptions of Theorem \ref{t4.3.7} are satisfied, consequently, there exists $x_0\in K$ such that $\<y-x_0,x_0-F(x_0)\>\ge 0,$ for all $y\in K.$  Since $Im(F)\subseteq K,$ for $y=F(x_0)\in K$ we obtain $\<F(x_0)-x_0,x_0-F(x_0)\>\ge 0.$ Hence, $-\|F(x_0)-x_0\|^2\ge 0,$ so we have $F(x_0)=x_0.$


\begin{thebibliography}{99}
\bibitem{Bai} Baiocchi,  C., Capelo,  A., \emph{Variational and Quasi-Variational Inequalities,} Wiley, New York, 1984.
\bibitem{Ben} Bensoussan, A.,  Lions, J.L., \emph{Applications des Inequations Variationelles en Control et Stochastiques}, Dunod,
Paris, 1978.
\bibitem{Ber-Gaf}  Bertsekas, D.P., Gafni, E.M., \emph{Projection methods for variational inequalities with applications to the traffic
assignment problem,} Math. Prog. Study, \textbf{17 (1982)},  139-159
\bibitem{Bro}  Browder, F.E., \emph{Multi-valued monotone nonlinear mappings}, {Trans. AMS}, \textbf{118 (1965)},  338-551
\bibitem{Daf}  Dafermos, S., \emph{Exchange price equilibria and variational inequalities,} Math. Programming, \textbf{46 (1990)},  391-402
\bibitem{fabian}  Fabian, M., Habala, P., H\'{a}jek, P.,  Montesinos Santalucía, V., Pelant, J., Zizler, V., \emph{Functional Analysis and Infinite-Dimensional Geometry}, Springer-Verlag, New York, 2001.
\bibitem{Fan}  Fan, K., \emph{A generalization of Tychonoff's fixed point theorem}, Math.Ann., \textbf{142 (1961)}, 305-310
\bibitem{Fe}  Ferrentino, R., \emph{Variational Inequalities and Optimization Problems,} Applied Mathematical Sciences, \textbf{1 (2007)}, 2327-2343
\bibitem{Fic}   Fichera, G., \emph{Problemi elastostatici con vincoli unilaterali: il problema di Signorini con ambigue condizioni al
contorno}, Atti Accad. Naz. Lincei Mem. Cl. Sci. Fis. Mat. Natur. Sez.,\textbf{ 7 (8) (1963-64)}, 91-140
\bibitem{Gia-Mau} Giannessi,  F.,  Maugeri, A., \emph{Variational Inequalities and Network Equilibrium Problems,} Plenum Press, New York, 1995.
\bibitem{Ist}  Istr\u a\c tescu, V.I., \textbf{Fixed point theory}, Reidel, 1981.
\bibitem{Kin-Sta}  Kinderlehrer, D. and  Stampacchia, G., \emph{An Introduction to Variational Inequalities and Their Applications}, Academic Press, New York, 1980.
\bibitem{La1}  L\'aszl\'o, Szil\'ard, \emph{Generalized Monotone Operators, Generalized Convex Functions and Closed Countable Sets,} Journal of Convex Analysis, \textbf{18 (2011)}, 1075-1091
\bibitem{La} L\'aszl\'o, S., \emph{Some Existence Results of Solutions for General Variational Inequalities}, Journal of Optimization Theory and Applications, \textbf{150 (2011)},  425-443
\bibitem{Min}  Minty, G.J., \emph{Monotone (nonlinear) operators in Hilbert spaces}, Duke Math. J., \textbf{29 (1962)}, 341-346
\bibitem{Nor2} Noor,  M.A., \emph{Merit functions for general variational inequalities}, Journal of Mathematical Analysis and Applications, \textbf{316 (2006)}, 736-752
\bibitem{Ru-Pe}Rus, I. A., Petru\c sel  A. and Petru\c sel G.,\emph{ Fixed Point Theory}, Cluj University Press, Cluj-Napoca, 2008.
\bibitem{Sta}  Stampacchia, G., \emph{Formes bilineaires coercitives sur les ensembles convexes}, C.R. Acud. Sci. Paris S\'er I. Math., \textbf{258 (1964)}, 4413-4416
\end{thebibliography}
\end{document}